\documentclass[bezier]{cpc}
\usepackage{ifthen,latexsym,amssymb,amsmath,graphics}

\usepackage{color}		
\usepackage{epsfig}

\newcommand{\C}[1]{{\protect\cal #1}}

\newcommand{\I}[1]{{\mathbb #1}}
\renewcommand{\O}[1]{\overline{#1}}
\newcommand{\ceil}[1]{\lceil #1\rceil}
\newcommand{\e}{\varepsilon}
\newcommand{\floor}[1]{\lfloor #1\rfloor}
\newcommand{\me}{{\mathrm e}}
\renewcommand{\mid}{:}

\newcommand{\beq}[1]{\begin{equation}\label{eq:#1}}
\newcommand{\eeq}{\end{equation}}
\newcommand{\req}[1]{\textrm{(\ref{eq:#1})}}

\newcommand{\bth}[2][nothing]{\ifthenelse{\equal{#1}{nothing}}
 {\begin{theorem}} {\begin{theorem}[#1]}\label{th:#2}}

\newcommand{\blm}[2][nothing]{\ifthenelse{\equal{#1}{nothing}}
 {\begin{lemma}} {\begin{lemma}[#1]}\label{lm:#2}}

\newcommand{\claim}[1]{\medskip\noindent{\bf Claim #1} }
\newcommand{\cqed}{\nolinebreak\mbox{\hspace{5 true pt}%
  \rule[-0.85 true pt]{2.0 true pt}{8.1 true pt}}}
\newcommand{\bcpf}{\medskip\noindent\textbf{Proof of Claim.} }
\newcommand{\ecpf}{\cqed \medskip}

\newcommand{\brm}{\smallskip\noindent{\bf Remark.} }

\begin{document}

\newcommand{\f}[1]{{\normalsize$#1$}} 
\newcommand{\NP}{NP}
\newcommand{\ELin}{{\sc E3-Lin-2}}
\newcommand{\m}{\mbox{\sc Max-Cut}}
\newcommand{\sat}{\mbox{\sc E3-SSAT}}

\title[Finding an An Unknown Acyclic Orientation]{Finding an Unknown Acyclic Orientation of a Given 
Graph\footnote{Reverts to public domain 28 years from publication.}
 }
\author[Oleg Pikhurko]{\spreadout{OLEG PIKHURKO}%
 \thanks{Partially supported by  the
National Science Foundation,  Grants 
DMS-0457512 and DMS-0758057.}\\
 \affilskip Department of Mathematical Sciences\\
 \affilskip Carnegie Mellon University\\
 \affilskip Pittsburgh, PA 15213, USA\\
Web: {\tt http://www.math.cmu.edu/\symbol{126}pikhurko}}

\maketitle

\begin{abstract}
 Let $c(G)$ be the smallest number of edges we have to test in order to
determine an unknown acyclic orientation of the given graph $G$ in the worst
case. For example, if $G$ is the complete graph on $n$ vertices, then $c(G)$
is the smallest number of comparisons needed to sort $n$ numbers.

We prove that $c(G)\le (1/4+o(1))\,n^2$ for any graph $G$ on $n$ vertices,
answering in the affirmative a question of Aigner, Triesch, and Tuza
[\emph{Discrete Mathematics}, \textbf{144} (1995) 3--10]. Also, we show that,
for every $\e>0$, it is \NP-hard to approximate the parameter $c(G)$ within a
multiplicative factor $74/73-\e$.\end{abstract}

\section{Introduction}

The \emph{acyclic orientation game} is the following. There are two players,
\emph{Algy} and \emph{Strategist}, to whom we shall also refer as \emph{him}
and \emph{her} correspondingly. Let $G$ be a given graph, known to both
players. At each step of the game, Algy selects any edge of $G$ and Strategist
has to orient this edge. The only restriction on Strategist's replies is that
the revealed orientation has to be \emph{acyclic}, that is, it does not
contain directed cycles. The game ends when the current partial orientation
extends to a \emph{unique} acyclic orientation of the whole graph $G$. Algy
tries to minimize the number of steps while Strategist aims at the
opposite. Let $c(G)$ be the length of the game assuming that both players play
optimally.

In other words, Algy wants to discover a `hidden' acyclic orientation of $G$
by querying edges. The parameter $c(G)$ measures the worst-case complexity,
that is, it is the smallest number such that Algy has a strategy that needs at
most $c(G)$ steps for every acyclic orientation of $G$.

The special case when $G=K_n$ (the complete graph on $n$ vertices) is
equivalent to the well-known \emph{minimum-comparison sorting} problem. While
the asymptotic result $c(K_n)=(1+o(1))\, n\log_2n$ of Ford and
Johnson~\cite{ford+johnson:59} is not hard to prove, the exact computation of
$c(K_n)$ seems very difficult. For example, the problem of computing
$c(K_{13})$ that appeared in Knuth's book~\cite[Chapter~5.3.1,
Exercise~35]{knuth:acp3} was solved only some 30 years later by
Peczarski~\cite{peczarski:02} (see also~\cite{peczarski:04}).

One interpretation of $c(G)$ for a general order-$n$ graph $G$ is that Algy
has to discover as much as possible information about the relative order of
$n$ elements given that certain pairs (namely, those corresponding to the
edges of the complementary graph $\O G$) cannot be queried. Manber and
Tompa~\cite{manber+tompa:81,manber+tompa:84} considered a related but
different problem where the player can query \emph{any} of the ${n\choose 2}$
possible pairs but has to find the relative order for every edge of the given
graph $G$.

Various results and bounds on $c(G)$ for general graphs were obtained by
Aigner, Triesch, and Tuza~\cite{aigner+triesch+tuza:95}, who in particular
studied graphs with $c(G)=e(G)$, calling them \emph{exhaustive}. Even this
property seems out of grasp. For example, the computational complexity of
checking whether $c(G)=e(G)$ is not known, see
Tuza~\cite[Problem~58]{tuza:01}. Alon and Tuza~\cite{alon+tuza:95} studied
$c(G)$, where $G\in\C G_{n,p}$ is a random graph of order $n$ with edge
probability $p$. They obtained, among other results, the correct order of
magnitude when $p$ is a non-zero constant: in this case $c(G)=\Theta(n\log n)$
almost surely.

The parameter $c(G)$ is not monotone with respect to the subgraph relation.
For example, while $c(K_n)=(1+o(1))n\log_2 n$, there are graphs $G$ of order
$n$ with $c(G)\ge\floor{n^2/4}$. Indeed, let $G$ be obtained from the
\emph{Tur\'an graph} $T_2(n)$, the complete bipartite graph with vertex parts
$V_1$ and $V_2$ of size $\floor{n/2}$ and $\ceil{n/2}$, by adding an arbitrary
bipartite graph $H$ inside one part of $T_2(n)$. Let $V(H)=U_1\cup U_2$ be a
bipartition of $H$. Suppose, for example, that $U_1,U_2\subseteq V_1$.
Strategist, in her replies, orients all edges from $U_1$ to $V(G)\setminus
U_1$ and from $V_2$ to $V_1\setminus U_1$. It is easy to see that Algy has to
ask about the orientation of \emph{every} edge of the original Tur\'an graph
$T_2(n)$, giving the claimed bound $c(G)\ge \floor{n/2}\, \ceil{n/2}=
\floor{n^2/4}$. We did not see any improvement over this bound in the
literature; our Proposition~\ref{pp:BeatTuran} improves it by $1$.

Aigner, Triesch, and Tuza~\cite[Page~10]{aigner+triesch+tuza:95} asked whether
the above bound is asymptotically sharp, that is, whether $c(G)\le
(1/4+o(1))\, n^2$ for every graph $G$ of order $n$. This open question is also
mentioned by Alon and Tuza~\cite[Page~263]{alon+tuza:95} and by
Tuza~\cite[Problem~55]{tuza:01}. Here we answer it in the affirmative.

\bth{1/4} For every $\e>0$ there is $n_0$ such that $c(G)\le (1/4+\e)n^2$ for
every graph $G$ of order $n\ge n_0$.\end{theorem}

Aigner, Triesch, and Tuza~\cite[Page~10]{aigner+triesch+tuza:95} also asked
if, furthermore, the upper bound can be improved to $n^2/4+C$ for some
absolute constant $C$. Unfortunately, we cannot prove this
strengthening.

Our proof of Theorem~\ref{th:1/4} shows more. Namely, for every $\e>0$ there
is $C$ such that, for any graph $G$ of order $n$, Algy can point (in one go) a
set $D$ of at most $Cn^{3/2} (\ln n)^{1/2}$ edges so that every acyclic
orientation of $D$ implies the orientation of all but at most $(1/4+\e)n^2$
remaining edges of $G$. Strategies of this type (when Algy has to send his
questions in a few \emph{rounds}) are useful
 in situations where the main limitation is on the
number of times that the players can exchange (large amounts of)
information. The study of comparison sorting in rounds was initiated by
Valiant~\cite{valiant:75}. We refer the reader to a survey by Gasarch, Golub,
and Kruskal~\cite{gasarch+golub+kruskal:03} for more information on the topic.

Aigner, Triesch, and Tuza~\cite[Page~10]{aigner+triesch+tuza:95} also asked
about the computational complexity of deciding whether $c(G)\le k$ on the
input $(G,k)$, see also Tuza~\cite[Problem~59]{tuza:01}. We obtain some
progress on this question as follows.

\bth{hardness} For every $\e>0$ it is \NP-hard to approximate $c(G)$ within a
multiplicative factor $74/73-\e$.\end{theorem}

It is possible that the acyclic orientation game is {PSPACE}-complete but the
author could not show this.

Also, one would like to complement Theorem~\ref{th:hardness} by providing a
polynomial time algorithm that approximates $c(G)$ within a multiplicative
factor $O(1)$. Unfortunately, the best approximability ratio in terms of
$n=v(G)$ that the author could find is $O(n/\log n)$: output $e(G)$ as an
upper bound on $c(G)$ and $e(G)\log_2 n/(Cn)$ as a lower bound, where $C$ is
the constant given by Theorem~\ref{th:lower} of Section~\ref{lower}. It is a
remaining open problem to close this gap.

\section{Notation}

We will use the standard graph terminology that can be found, for example, in
the books by Bollob\'as~\cite{bollobas:mgt} or Diestel~\cite{diestel:gt}. Some
of the less common conventions are as follows.

For brevity, we usually abbreviate an unordered pair $\{x,y\}$ as $xy$. We
write $(x,y)$ to denote that an edge $xy$ is oriented from $x$ to $y$. Let
$[n]=\{1,\dots,n\}$.

For a graph $G$ and disjoint sets of vertices $X,Y\subseteq V(G)$, $G[X,Y]$
denotes the bipartite graph on $X\cup Y$ consisting of all edges of $G$
connecting $X$ to $Y$. A \emph{cut} of $G$ is a partition $V(G)=V_1\cup
V_2$. Its \emph{value} is $e(G[V_1,V_2])$, the number of edges connecting
$V_1$ to $V_2$. If the graph $G$ comes equipped with the \emph{edge-weight}
function $w: E(G)\to\I R$, then the \emph{value} of the cut $\{V_1,V_2\}$ is
$\sum_{x_1\in V_1}\sum_{x_2\in V_2} w(x_1x_2)$. The \emph{max-cut} parameter
$\m(G)$ is the maximum value of a cut of $G$.

A partial order $\prec$ on $V(G)$ and an acyclic orientation of $E(G)$ are
\emph{compatible} if, for every edge $xy\in E(G)$, the elements $x$ and $y$
are comparable in the $\prec$-ordering and, moreover, $(x,y)$ if and only if
$x\prec y$. In this case, the phrases and expressions `$(x,y)$', `$y$ is above
$x$', `$x$ is smaller than $y$', `$y\succ x$', and so on, are all synonymous.

\section{Bounding $c(G)$ for Order-$n$ Graphs}\label{large}

\begin{proposition}\label{pp:BeatTuran} For every $n\ge 3$ there is a graph $G$ of order $n$ with
$c(G)\ge \floor{n^2/4}+1$.\end{proposition}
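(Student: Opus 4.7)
Let $G$ be the graph on $n$ vertices obtained from the Tur\'an graph $T_2(n)$ (with parts $V_1,V_2$, $|V_2|\ge|V_1|$) by adding a single edge $e=xy$ inside the larger part $V_2$; then $e(G)=\floor{n^2/4}+1$, so the trivial upper bound gives $c(G)\le\floor{n^2/4}+1$, and it suffices to prove the matching lower bound.

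To establish $c(G)\ge\floor{n^2/4}+1$, I would describe an adaptive Strategist strategy that forces Algy to query every edge of $G$. The strategy extends the Tur\'an lower-bound argument recalled in the introduction: Strategist maintains a consistent partial order on $V(G)$ that she refines step by step, using a distinguished $U_1\subseteq V_1$ and placing $U_1$ at the bottom, $V_2$ in the middle, and $V_1\setminus U_1$ at the top, with the crucial extra rule that $x$ and $y$ are kept mutually incomparable inside $V_2$ for as long as possible. When Algy queries a Tur\'an edge not involving $x$ or $y$, Strategist answers exactly as in the original argument; when Algy queries a Tur\'an edge $v_1x$ or $v_1y$, Strategist adaptively picks a consistent ``side'' for $v_1\in V_1$ relative to the pair $\{x,y\}$ so that no directed path from $x$ to $y$ (or vice versa) is created through $v_1$; and when Algy finally queries $e$ itself, Strategist commits to a direction consistent with the current partial order.

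The verification then has two parts. First, every Tur\'an edge of $G$ is non-forced in Strategist's running orientation, by essentially the same directed-path analysis as in the original Tur\'an bound (the three-layer structure ensures that no length-2 detour exists). Second, the special edge $e$ is also non-forced throughout the game, because Strategist's side-choices guarantee that there is no directed path connecting $x$ and $y$ in the bipartite part. Combining the two, any partial orientation of strictly fewer than $\floor{n^2/4}+1$ edges still admits distinct acyclic extensions, so Algy must query every edge.

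The hard part is the adaptive bookkeeping in the second step: if Algy queries $e$ early, Strategist's commitment on $e$ together with Tur\'an edges adjacent to $x$ or $y$ could otherwise propagate through the triangles $xyv_1$ ($v_1\in V_1$) and force an unqueried Tur\'an edge. The adaptive side-choice is designed precisely to prevent such forcing, and the main obstacle will be to check that Strategist's sequence of side-choices can always be made consistent with some acyclic orientation of $G$, regardless of the order in which Algy asks her questions.
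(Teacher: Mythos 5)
Your proposed graph and argument are genuinely different from the paper's, and the difference is where the trouble lies. Your lower bound amounts to the assertion that $T_2(n)$ plus one edge $xy$ inside a part is \emph{exhaustive}, i.e.\ $c(G)=e(G)$, and your sketch does not establish this; moreover the missing part is the entire content of the proof, not routine bookkeeping. Note first that no non-adaptive Strategist can work here: in every acyclic orientation of $G$ each triangle $vxy$ (with $v\in V_1$) has one edge implied by transitivity, so Strategist cannot simply commit to an orientation in advance, unlike in the introduction's $\lfloor n^2/4\rfloor$ bound where the committed three-layer orientation has no transitive shortcuts along edges of $T_2(n)$. Hence everything rests on the adaptive side-choices, and you yourself flag their global consistency as an unresolved ``main obstacle.''

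Second, the verification you outline is based on an incorrect simplification. It is not enough that ``no length-2 detour exists'' and that no directed $x$--$y$ path is created: once some vertices of $V_1$ sit above $\{x,y\}$ and others below, an unqueried edge can become determined along longer revealed paths through $V_2\setminus\{x,y\}$. For instance, if $x\to u$, $u\to w$, $w\to b$ are revealed with $u,b\in V_1$, $w\in V_2\setminus\{x,y\}$ and $xb$ unqueried, then $xb$ is forced and Algy saves a query. Exactly this kind of configuration threatens when Algy's \emph{first} query at a vertex $u\in V_1$ is an edge $uw$ into $V_2\setminus\{x,y\}$: Strategist must then place $u$ above or below $w$ before knowing from which of $x,y$ Algy will later approach $u$, and your sketch never specifies how such queries are answered nor proves that no forcing (of any path length, at any moment) can be engineered from these premature commitments. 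So there is a genuine gap: the adversary strategy is undefined on most of the edge set, and the key invariant --- that no unqueried edge is ever determined --- is not verified; indeed it is not even clear from your argument that your graph is exhaustive. The paper sidesteps all of this by taking the complete tripartite graph with part sizes $(k,k,1)$ or $(k-1,k-1,2)$: there the only adaptive decision is a single local choice at each vertex $z$ of the small part, made at the first queried edge incident to $z$, and the count $|X|\,|Y|+|Z|(|X|+1)=\lfloor n^2/4\rfloor+1$ follows in a few lines. If you want to salvage your route, you would need to spell out Strategist's answers on all edges into $V_2\setminus\{x,y\}$ and prove the no-forcing invariant against arbitrary query orders; adopting a construction in the spirit of the paper's is far easier.
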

 \begin{proof} 
Let $G$ be the complete 3-parite graph with parts $X\cup Y\cup Z$ where
$|X|=|Y|=\floor{(n-1)/2}$. (Thus, depending on the parity, $n=2k+1$ or
$n=2k$, the part sizes are either $(k,k,1)$ or $(k-1,k-1,2)$.)

Strategist orients $(x,y)$ for every $x\in X$ and $y\in Y$ and answers Algy's
questions about these edges accordingly.

For every $z\in
Z$, Strategist does the following. She waits until Algy queries an edge
incident to $z$ for the first time. If this is an edge $xz$ with $x\in X$,
then Strategist orients all edges from $X\cup Y$ to $z$ (and answers all
Algy's questions accordingly). Note that Algy has to query every edge $yz$
with $y\in Y$ because neither of its orientations would create a directed
cycle in Strategist's ordering. Thus, Algy has to query at least $|Y|+1$ edges
at $z$ (including the first edge $xz$). Likewise, if the first queried edge
was $yz$ with $y\in Y$, then Strategist orients all edges from $z$ to $X\cup
Y$ and Algy has to query all edges $xz$ with $x\in X$. 

Also, independently of the game scenario on the edges adjacent to $Z$, Algy has
to query all edges between $X$ and $Y$. Thus $c(G)\ge |X|\times
|Y|+|Z|(|X|+1)$, which is easily seen to be the
required bound.\end{proof}

Next, we prove Theorem~\ref{th:1/4}. Its proof, where Algy queries random
edges in the first round, is somewhat similar to the methods of Bollob\'as and
Rosenfeld~\cite{bollobas+rosenfeld:81} (see also H\"aggkvist and
Hell~\cite{haggkvist+hell:81}) who studied how much information about the
unknown linear order can be obtained in just one round with the given number
of queries.

In order to prove Theorem~\ref{th:1/4} we will need the following auxiliary
result.

\bth[Ruzsa and Szemer\'edi~\cite{ruzsa+szemeredi:78}]{efr}
 For every $\e>0$ there is $\delta>0$ such that if a graph $G$ of order $n$
has at most $\delta n^3$ triangles then we can remove at most $\e n^2$ edges
from $G$, making it triangle-free.\end{theorem}

\begin{proof}[Proof of Theorem~\ref{th:1/4}.] Given $\e>0$, let
$\delta=\delta(\e/2)>0$ be the constant returned by Theorem~\ref{th:efr} on the
input $\e/2$. Fix an arbitrary positive constant $C$ such that
$C^2>2(1+\delta)/\delta^2$. Let $n$ be sufficiently large. Let $G$ be an
arbitrary graph of order $n$. Let $V=V(G)$, $p=C \sqrt{\ln n/n}$, and $w=\floor{\delta n/2}$.

Algy selects a set $D$ of edges of $G$ by including each element of $E(G)$
into $D$ with probability $p$, independently of the other choices. Let the
acronym \emph{whp} stand for `with high probability', meaning with probability
$1-o(1)$ as $n\to\infty$.

\claim1 Whp, the following holds for every linear ordering $L=(V,\prec)$ of
$V$ and every $2w$ pairwise distinct vertices $x_1,\dots,x_w,y_1,\dots,y_w\in
V$ with $x_i\prec y_i$ for $i\in[w]$. For $i\in [w]$, define
 \beq{Zi}
 Z_i=\{z\in V\mid x_i\prec z\prec y_i,\  x_iz,zy_i\in E(G)\}\setminus
\{x_1,\dots,x_{i-1},y_1,\dots,y_{i-1}\}.
 \eeq
 If each $Z_i$ has at least $\delta n$ elements, then there are $i\in[w]$ and
$z\in Z_i$ such that $x_iz$ and $zy_i$ belong to $D$.\medskip

\bcpf Fix any linear order $\prec$ on $V$ and $2w$ arbitrary pairwise distinct
vertices
$x_1,\dots,x_w,y_1,\dots,y_w$ such that $x_i\prec y_i$ and $|Z_i|\ge \delta n$
for each $i\in [w]$. Clearly, there are at most $n!\, n^{2w}$ choices of a such
configuration.

The probability that this configuration violates the claim is at most
$(1-p^2)^{\delta n w}$ because there are at least $w\times \delta n$ choices of
$(i,z)$ with $i\in[w]$ and $z\in Z_i$, the probability that at least one of
the edges $x_iz$ and $zy_i$ of $G$ is not in $D$ is $1-p^2$, while these
probabilities are independent over distinct pairs $(i,z)$. (Indeed, the events
for different pairs $(i,z)$ involve disjoint sets of edges; this was the
reason for excluding any vertex in $\{x_1,\dots,x_{i-1},y_1,\dots,y_{i-1}\}$
from $Z_i$ in~\req{Zi}.)

The union bound shows that the total probability of failure is at most
 $$
 n!\, n^{2w}(1-p^2)^{\delta n w}< \me^{n\ln n+2w\ln n - p^2\delta n w} \le
 \me^{(1+\delta - C^2 \delta^2/2+o(1))\,n\ln n}.
 $$
 This is $o(1)$ by the choice of $C$. The claim is proved.\ecpf

Also, whp $|D|\le pn^2$ 
by the Chernoff bound~\cite{chernoff:52}. Hence,
there is a set $D$ that satisfies the conclusion of Claim~1 and has at most
$pn^2$ elements. Fix such a set $D$.

During the first round, Algy asks about the orientation of all edges in
$D$. After we have received Strategist's answers, let $H$ be the spanning
subgraph of $G$ that consists of those edges of $G$ whose orientation is still
undetermined from the revealed orientation of $D$.

We claim that $H$ has at most $\delta n^3$ triangles. Suppose on the contrary
that this is false. Fix an arbitrary linear ordering $\prec$ of $V$ that is
compatible with the orientation of $D$. Let us define $x_i$ and $y_i$
inductively on $i$. Suppose that $i\in[w]$ and we have already defined
$x_1,\dots,x_{i-1}$ and $y_1,\dots,y_{i-1}$.

Let $U=\{x_1,\dots,x_{i-1},y_1,\dots,y_{i-1}\}$. The vertices in
$U$ belong to at most $2(i-1) {n\choose 2}< w n^2$ triangles of $H$. So,
the graph $H'=H-U$ has at least $\delta n^3- w n^2\ge \delta n^3/2$
triangles. By averaging, $H'$ contains a pair of vertices $x_{i}$ and $y_{i}$
such that there are at least $(\delta n^3/2)/{n\choose 2}> \delta n$ vertices
$z\in V(H')=V\setminus U$ such that $x_{i}\prec z\prec y_{i}$ and
$\{x_i,y_i,z\}$ spans a triangle in $H'$. Now, increase $i$ by 1 and iterate
the above step if the new index $i$ is still at most $w$.

For $i\in [w]$, let $Z_i$ 
be defined by
\req{Zi}; we have $|Z_i|\ge \delta n$.  
The obtained vertices $x_1,\dots,x_w,y_1,\dots,y_w$ satisfy all assumptions of
Claim~1 with respect to the linear order $\prec$.
By the definition of $D$ (which was chosen to satisfy the conclusion of
Claim~1), there are $i\in[w]$ and $z\in Z_i$ such that $x_iz,zy_i\in D$. By
the definition of $Z_i$, we have $x_i\prec z\prec y_i$. Since $\prec$ was
chosen to be compatible with Strategist's replies, the edges $x_iz,y_iz\in D$
are oriented as $(x_i,z)$ and $(z,y_i)$. Note that $x_i$ and $y_i$ are
adjacent in $H$ because these two vertices belong to at least $\delta n\ge 1$
triangles of $H$ by the definition of $x_i$ and $y_i$. But then the
orientation of the edge $x_iy_i\in E(G)$ is determined after the first round,
contradicting the fact that $x_iy_i\in E(H)$. Thus the graph $H$ of order $n$
has at most $\delta n^3$ triangles.

By the choice of $\delta$ (that is, by Theorem~\ref{th:efr}) there is a set
$F$ of at most $\e n^2/2$ edges such that $E(H)\setminus F$ contains no
triangles. By the Tur\'an theorem~\cite{turan:41} (or rather the special case
which was earlier proved by Mantel~\cite{mantel:07}) we have $|E(H)\setminus
F|\le n^2/4$. Thus $e(H)\le n^2/4+\e n^2/2$.

In the second round, Algy asks about the orientation of all edges of $H$. By
the definition of $H$, this completely determines the orientation of all edges
of $G$. Assuming that Strategist plays optimally, we have $$
 c(G)\le |D| + e(H)\le pn^2+\left(\frac{n^2}4 + \frac{\e n^2}2\right) \le\frac{n^2}4 +\e n^2, 
 $$
 finishing the proof of Theorem~\ref{th:1/4}.\end{proof}

\brm All known proofs of Theorem~\ref{th:efr} use some version of the
Regularity Lemma of Szemer\'edi~\cite{szemeredi:76} and therefore return a function
$\delta(\e)$ that approaches $0$ extremely slowly and is of little practical
value. Tao Jiang~\cite{jiang:pc} observed that instead of
Theorem~\ref{th:efr} one can use the result of Moon and
Moser~\cite{moon+moser:62} that a graph of order $n$ and size $m$ contains at
least $(m/3n)(4m-n^2)$ triangles. His calculations~\cite{jiang:pc} based on
this idea show that $c(G)\le n^2/4+2n^{7/4}(\ln n)^{1/4}$
for any order-$n$ graph $G$ with $n$ large. On the other hand, if we use 
Theorem~\ref{th:efr}, then we can deduce some structural information about
almost extremal graphs. Namely,
if an order-$n$ graph $G$ satisfies 
$c(G)=(\frac14+o(1))n^2$, then  by the Stability Theorem of Erd\H
os~\cite{erdos:67a} 
and Simonovits~\cite{simonovits:68} applied to the triangle-free graph
$H\setminus F$, there is a partition $V(G)=V_1\cup V_2$ with 
$(\frac14+o(1))n^2$ edges going across (which is asymptotically largest possible).
 Unfortunately, neither of these two approaches
has led us to the complete answer so far.\medskip

\section{Inapproximability Results}\label{hardness}

In order to prove Theorem~\ref{th:hardness} we will need the following
auxiliary result.

\blm{74} For every $\delta>0$, it is \NP-hard to approximate the graph parameter
 $$
 3e(G)+\m(G)
 $$
 within a multiplicative factor $74/73-\delta$.\end{lemma}

\begin{proof} We will use the construction of H\aa stad~\cite{hastad:97,hastad:01} that
demonstrates that {\sc Max-Cut} is \NP-hard to approximate within a factor
$17/16-\e$. Since we are interested in a somewhat different parameter than
just {\sc Max-Cut}, we have to unfold H\aa stad's construction.

First, H\aa stad proves~\cite[Theorem~2.3]{hastad:97} that it is \NP-hard to
approximate \ELin\ within factor less than 
$2$. That is, for every $\e>0$
it is \NP-hard to distinguish, for an input system $\C S$ of $s$
equations over $\I Z_2$ each of the form $x+y+z=0$ or $x+y+z=1$, 
between the cases when some assignment of
variables satisfies at least $(1-\e)s$ equations and when every assignment
satisfies at most $(1/2+\e)s$ equations.

Next, H\aa stad constructs~\cite[Theorem~4.2]{hastad:97} a graph $G$ from a
given instance $\C S$ of \ELin\ with $s$ equations as follows. We can assume
that $s_0\ge s/2$ equations of $\C S$ are of the form $x+y+z=0$. (If $s_0<
s/2$, we can simply replace each variable $x$ by $1-x$.) Let $s_1=s-s_0$ be
the number of equations of the form $x+y+z=1$.

Each equation $x+y+z=0$ and $x+y+z=1$ is replaced respectively by the
so-called \emph{$8$-gadget} and \emph{$9$-gadget} of Trevisan, Sorkin,
Sudan, and
Williams~\cite{trevisan+sorkin+sudan+williamson:96,trevisan+sorkin+sudan+williamson:00}.
The definition of these gadgets can be found in the journal
version~\cite[Lemmas~4.2 and~4.3]{trevisan+sorkin+sudan+williamson:00}. For
our purposes we need to know only that, for $\alpha=8$ or $9$, this particular
$\alpha$-gadget is an edge-weighted graph of total edge-weight
$\alpha+1$ whose vertex set consists of the variables $x$, $y$, and $z$, the
constant $0$, and some new vertices so that:
 \begin{itemize}
 \item every $0/1$-assignment of $x$, $y$, and $z$ that satisfies the equation
can be extended to a cut of value at least $\alpha$ but not to a cut of
a strictly larger value;
 \item no $0/1$-assignment of $x$, $y$, and $z$ that violates the equation can
be extended to a cut of value strictly larger than $\alpha-1$.
 \end{itemize}
 (Here, a cut in a gadget $H$ is encoded by an assignment $f:V(H)\to\{0,1\}$
with $f(0)=0$.)
Also, the special vertices (the variables and the constant 0) form an
independent set in both gadgets. Thus the constructed graph $G$ has total
edge-weight $9s_0+10s_1$. 

The above properties imply that if we can satisfy at least $(1-\e)s$ equations
of $\C S$ then $G$ has a cut of value at least $8s_0+9s_1-10\e s$. Also, if
every assignment of variables violates at least $(1/2-\e)s$ equations, then no
cut of $G$ can have value larger than $8s_0+9s_1-(1/2-\e)s$. Thus, if we cannot
distinguish these two alternatives for \ELin\ in polynomial time, then we
cannot distinguish in polynomial time whether, for edge-weighted graphs,
$3e(G)+\m(G)$ is at least $u_1$ or at most $u_2$, where
 \begin{eqnarray*}
 u_1&=&3(9s_0+10s_1)+8s_0+9s_1-10\e s\ =\ -4s_0+39s-10\e s,\\
 u_2&=&3(9s_0+10s_1)+8s_0+9s_1-(1/2-\e)s\ =\ -4s_0 + 38.5 s + \e s.
 \end{eqnarray*}
 When $\e<1/22$, then the ratio $u_1/u_2$ is minimized when $s_0=s/2$ is as
small as possible. Thus $u_1/u_2\ge 74/73-o(1)$ as $\e\to0$, giving the
inapproximability result for edge-weighted graphs.

Finally, we can get rid of edge weights by choosing a large integer $l$, say
$l=s$, cloning each vertex of $G$ $l$ times, and replacing each edge of weight
$\alpha$ by a pseudo-random bipartite graph of edge density $\alpha$. (The
edge weights in each gadget are real numbers lying between $0$ and $1$.) We
refer the Reader to a survey by Krivelevich and
Sudakov~\cite{krivelevich+sudakov:06} on the properties of pseudo-random
graphs.  Up to a multiplicative error $1+o(1)$ as $l\to\infty$, any cut of the
new graph $G'$ corresponds to a \emph{fractional cut} of $G$, where the
vertices of $G$ may be sliced between the two parts in some ratio and the
value of the cut is defined in the obvious way. However, it is easy to see
that, for an arbitrary edge-weighted (loopless) graph, there is an integer vertex cut
which is at least as good as any fractional cut. Thus $\m(G')=(1+o(1)) l^2
\m(G)$ (and $e(G')=(1+o(1)) l^2 e(G)$) as $l\to\infty$.

The obtained family of (unweighted) graphs $G'$ establishes the lemma.\end{proof}

\brm The weaker result that it is \NP-hard to approximate $3e(G)+\m(G)$ within
a factor $113/112-\delta$ can obtained from the statement of the $17/16$-result of
H\aa stad (without analyzing the structure of his graphs) by observing that
$\m(G)\ge \frac12\, e(G)$ for any $G$ (and doing some easy calculations).
 \medskip

\begin{proof}[Proof of Theorem~\ref{th:hardness}.] Let $l$ be a positive
integer and let $G$ be an arbitrary graph. Define $n=v(G)$, $m=e(G)$, and
$t=\m(G)$.

We construct a new graph $H=H(G,l)$ as follows. Let $V=V(G)$. For each $x\in
V$, introduce a new vertex $x'$. Let $V'=\{x'\mid x\in V\}$ consist of all new
vertices. For each edge $xy\in E(G)$, introduce a set $U_{xy}$ of $l$ new
vertices. Let $U=\cup_{xy\in E(G)} U_{xy}$. The new graph $H$ has $V\cup
V'\cup U$ for the vertex set. Thus the total number of vertices is
$v(H)=2n+lm$. The edges of $H$ are as follows. Let $V$ span the complete
graph. Connect $x$ to $x'$ for each $x\in V$. Put a complete bipartite graph
between $U_{xy}$ and $\{x,y,x',y'\}$ for every $xy\in E(G)$. These are all the
edges (all other pairs of $V(H)$ are non-adjacent). Thus, for example, the
size of $H$ is $e(H)={n\choose 2} + n + 4lm$.

\claim1 $c(H)\ge 3lm+lt$.\medskip 

\bcpf Let $V=X\cup Y$ be a maximum cut of $G$, that is, $e(G[X,Y])=t$. Let
$V'=X'\cup Y'$ be the corresponding partition of $V'$. Let
$X=\{x_1,\dots,x_a\}$ and $Y=\{y_1,\dots,y_b\}$.

Let $P=(V(H),\preceq)$ be the partially ordered set on $V(H)$, where $\prec$ is
the transitive closure of the digraph $D$ that consists of the following
ordered pairs:
 \begin{itemize}
 \item $(x_i,x_{i+1})$ and $(x_i',x_{i+1}')$ for $i\in[a-1]$,
 \item $(y_i,y_{i+1})$ and $(y_i',y_{i+1}')$ for $i\in[b-1]$,
 \item $(x_i,x_i')$ for $i\in[a]$,
 \item $(y_i',y_i)$ for $i\in[b]$,
 \item $(x_a,y_1)$ and $(y_b',x_1')$,
 \item $(x_i,u)$, $(u,x_j)$, and $(u,x_i')$ for $u\in U_{x_ix_j}$
and $x_ix_j\in E(G[X])$ with $i<j$,
 \item $(y_i,u)$, $(y_j',u)$, and $(u,y_j)$ for $u\in U_{y_iy_j}$
and $y_iy_j\in E(G[Y])$ with $i<j$,
 \item $(x,u)$, $(y',u)$, $(u,x')$, and $(u,y)$ for $u\in U_{xy}$ and $xy\in
   E(G[X,Y])$ with $x\in X$.
 \end{itemize} 
 
In other words, we take two chains, namely $x_1\prec\dots\prec x_a\prec
y_1\prec \dots\prec y_b$ and $y_1'\prec\dots\prec y_b'\prec x_1'\prec \dots
\prec x_a'$. We let $x\prec x'$ for $x\in X$ and $y\succ y'$ for $y\in Y$. For
each $x_i\prec x_j$ that are adjacent in $G$, we insert the set $U_{x_ix_j}$
(as an antichain) above $x_i$ but below $x_j$ and $x_i'$. For each $y_i\prec
y_j$ that are adjacent in $G$, we insert the set $U_{y_iy_j}$ (as an
antichain) above $y_i$ and $y_j'$ but below $y_j$. For each $xy\in E(G[X,Y])$
with $x\in X$, we insert the set $U_{xy}$ (as an antichain) above $x$ and $y'$
but below $x'$ and $y$. Figure~\ref{fg:1} shows the placement of the vertices
of $U$ relative to $V\cup V'$. Finally, we add those order relations that are
implied by the above relations.

\begin{figure}[htbp]
\begin{center}

\scalebox{1}{\input{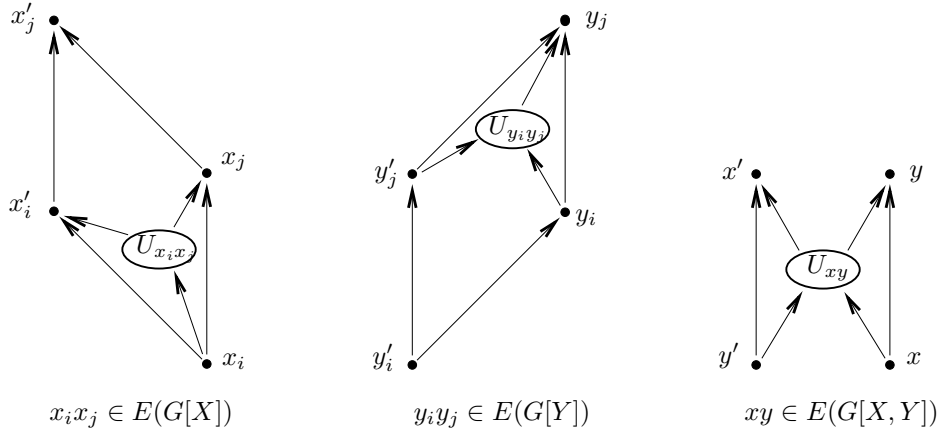}}

\caption{The placement of the set $U$ relative to $V\cup V'$.}
\label{fg:1}
\end{center}
\end{figure}

It is easy to check that $D$ has no oriented cycles and that the obtained
partial order $P$ determines the orientation of every edge of $H$. Strategist
chooses this orientation and answers all Algy's questions accordingly.

The digraph $D$ defined above is not in general the Hasse diagram of the poset
$P$: for example, if $x_ix_{i+1}$ is an edge of $G$, then the relation $x_i\prec
x_{i+1}$ can be determined from $x_i\prec u\prec x_{i+1}$ for some $u\in
U_{x_ix_{i+1}}$. However (and this is the crucial property!) one can routinely
check that every arc of $D$ that connects $V\cup V'$ and $U$ (in either
direction) does belong to the Hasse diagram of $P$, that is, the orientation
of this edge is not determined from the order relation of all other pairs
of~$P$.

Clearly, Algy has to query \emph{every} edge that belongs to the Hasse diagram
of $P$. Thus, Algy has to ask at least $3l$ (resp.\ $4l$) questions per each
edge of $G[X]$ and $G[Y]$ (resp.\ $G[X,Y])$. This shows that $c(H)\ge
3l(m-t)+4lt=3lm+lt$, as required.\ecpf

\claim2 $c(H)\le 3lm+lt+c(K_n)+n$.\medskip

\bcpf Algy finds the orientation of all edges in the clique $H[V]$ by asking
$c(K_n)$ questions. Then he asks about the orientation of every edge $xx'$
with $x\in V$. Let $X$ consist of those $x\in V$ for which we have $(x,
x')$. Let $Y=V\setminus X$.

Take any $xy\in E(G[X])$. Suppose without loss of generality that $x\prec
y$. For each $u\in U_{xy}$, Algy asks about the orientation of the edge
$uy$. Whatever the answer is, it determines the orientation of $ux$ or
$uy'$. Hence, at most $3l$ questions are enough to determine the orientation
of all edges incident to $U_{xy}$. The same applies to the case $xy\in
E(G[Y])$. Finally, Algy asks about all edges incident to $U_{xy}$ where $xy\in
E(G[X,Y])$, posing $4l$ questions per each edge of the cut $\{X,Y\}$. Thus the
total number of questions is at most
 $$
 c(K_n)+n+3l \big(e(G[X]) + e(G[Y])\big) + 4l e(G[X,Y]) = c(K_n)+n +
3le(G) +l e(G[X,Y]),
 $$
 giving the required bound.\ecpf

As it was shown by Ford and Johnson~\cite{ford+johnson:59},
$c(K_n)=(1+o(1)) n\log_2 n$. Thus Claims~1 and~2 show that $c(H)=(1+o(1)) l
(3e(G)+\m(G))$ as $n\to\infty$, if we take $l\gg \ln n$, say $l=n$. (Note
that, by removing isolated vertices from $G$, we can assume that $e(G)\ge
v(G)/2$.)  Since the order of $H$ is bounded by a polynomial in $v(G)$, the
desired inapproximability result for the parameter $c$ follows from
Lemma~\ref{lm:74}.\end{proof}

\section{A General Lower Bound on $c(G)$}\label{lower}

Here is the lower bound on $c(G)$ that implies the approximability result
mentioned at the end of the Introduction.

\bth{lower} There is a constant $C>0$ such that any graph $G$
satisfies 
 \beq{lower}
 c(G)\ge \frac{e(G)\log_2(v(G))}{Cv(G)}.
 \eeq
 \end{theorem}
 \begin{proof} Fix a sufficiently large $C$.
 Let $G$ be an arbitrary graph of order $n$ and size $m$.

Clearly, it is enough to prove the theorem under the assumption that $G$ has
no isolated vertices. Indeed, if we remove isolated vertices, then $c(G)$
remains the same while the right-hand side of~\req{lower} can only
increase. 

We have $c(G)\ge n/2$ because, for every vertex $x$ of $G$, we have
to query at least one edge incident to $x$. It follows that~\req{lower} holds
unless
 \beq{m}
 m> \frac{Cn^2}{2\log_2 n}.
 \eeq

So suppose that~\req{m} holds.  The average degree of $G$ is $2m/n$. If we
remove a vertex whose degree is less than $m/n$, then the average degree of
$G$ goes up. By iteratively repeating this step, we can find a non-empty set
$X\subseteq V(G)$ such that the induced subgraph $H=G[X]$ has minimum degree
at least $d=\ceil{m/n}$.

The graph $H$ contains at least $|X|\, d!\ge (d+1)!$ directed paths $P$ of
length $d$: there are $|X|$ choices for the first vertex and, inductively for
$i=2,\dots, d+1$, at least $d-i+2$ choices for the $i$-th vertex. For every
choice of $P$ choose an acyclic orientation of the whole graph $G$ compatible
with the orientation of $P$. Clearly, each orientation of $G$ can appear this
way for at most ${n\choose d+1}\le 2^n$ different directed $d$-paths
$P$. Hence, $a(G)$, the number of acyclic orientations of $G$, is at least
$(d+1)!/2^n$. The usual information theoretic lower bound (see, for example,
Aigner~\cite[Page~24]{aigner:cs}) implies that
 $$
 c(G)\ge \log_2(a(G))\ge \log_2\left(\frac{(d+1)!}{2^n}\right).
 $$
 If $C$ is large, then also $n$ is large by~\req{m} and because 
$m\le {n\choose 2}$. Again
by~\req{m}, we have 
 \beq{d}
 d\ge \frac mn> \frac{Cn}{2\log_2n}\quad \mbox{and}\quad \log_2d >\frac{\log_2n}2,
 \eeq
 so $d$ is forced to be large too.
By Stirling's formula, $\log_2((d+1)!)>0.9\, d\log_2 d$. We have
by~\req{d} that, for example, $d\log_2
d> (C n/(2\log_2n))\times (\log_2 n)/2> 2 n$. Thus
 $$
 \log_2\left(\frac{(d+1)!}{2^n}\right) > 0.9\,d\log_2d - n > 0.4\,
 d\log_2d\ge 0.4\times \frac mn\times \frac{\log_2 n}2,
 $$ 
giving the required.\end{proof}

\brm The inequality in~\req{lower} is sharp (up to an $O(1)$-factor) when $G$
is the complete graph $K_n$ or, more generally, when $G$ is a typical graph in
$\C G_{n,p}$ with constant edge-probability $p>0$ by the result of Alon and
Tuza~\cite{alon+tuza:95}.\medskip

\section*{Acknowledgments}

The author thanks Alan Frieze, Tao Jiang, and Oleg Verbitsky for helpful
comments.

\end{document}